\documentclass[12pt]{amsart}
\UseRawInputEncoding
\usepackage[usenames]{color}
\input xypic


\usepackage{verbatim}
\usepackage{url}
\usepackage{bm}
\usepackage{tikz-cd}
\usepackage{lipsum}

\setlength{\textwidth}{6.5in}     
\setlength{\oddsidemargin}{0in}   
\setlength{\evensidemargin}{0in}  
\setlength{\textheight}{8.5in}    
\setlength{\topmargin}{0in}       
\setlength{\headheight}{0.2in}      
\setlength{\headsep}{0.2in}         
\setlength{\footskip}{.5in}       


\bibliographystyle{plain}                                              


\usepackage{amsmath}
\usepackage[centertags]{amsmath}
\usepackage{amsfonts}
\usepackage{amssymb}
\usepackage{amsthm}
\usepackage{newlfont}
\usepackage{mathtools}
\usepackage{url}

\theoremstyle{plain}
\newtheorem{thm}{Theorem}
\newtheorem{cor}[thm]{Corollary}
\newtheorem{conj}[thm]{Conjecture}
\newtheorem{lem}[thm]{Lemma}
\newtheorem{lem*}[thm]{Lemma}

\newtheorem{ass}[thm]{Assumptions}
\newtheorem{assumption}[thm]{Assumptions}

\theoremstyle{definition}
\newtheorem{dfn}{Definition}
\theoremstyle{remark}
\newtheorem{rem}{Remark}
\newtheorem{rem*}{Remark}
\newtheorem{ex}[rem]{Example}

\numberwithin{rem}{section} 
\numberwithin{dfn}{section} 
\numberwithin{equation}{section} 
\numberwithin{thm}{section} 

\def\!{\operatorname{!}}

\def\Z{\mathbb Z}

\def\1{\bold 1}

\def\ord{\operatorname{ord}}

\def\Hom{\operatorname{Hom}}

\def\Im{\operatorname{Im}}


\def\tr{\operatorname{tr}}



\def\End{\operatorname{End}}

\def\res{\operatorname{res}}

\def\mod{\operatorname{mod}}

\begin{document}

\title{Note on  linear relations in Galois cohomology and {\'e}tale $K$-theory of curves}

\author{Piotr Kraso\'n}
\begin{abstract}
In this paper we investigate a local to global principle for Galois cohomology of number fields with coefficients in the Tate module of an abelian variety. In \cite{bk13}  G. Banaszak and the  author obtained the sufficient condition for the validity of  the local to global principle for {\'e}tale $K$-theory of a curve . This condition in fact  has been established by means of
an analysis of the corresponding problem in the Galois cohomology. We show that in some cases this result is the best possible i.e 
if this condition does not hold we obtain  counterexamples.
 We also give some examples of curves and their Jacobians.  Finally, we prove the dynamical version of the local to global principle for {\'e}tale $K$-theory of a curve. The dynamical local to global principle for the groups of Mordell-Weil type has recently been considered by S. Bara{\'n}czuk in \cite{b17}. We show that all our results remain valid for Quillen $K$-theory of ${\cal X}$ if the Bass and Quillen-Lichtenbaum conjectures hold true for ${\cal X}.$
\end{abstract}
\date{\today}

\address{Institute of Mathematics, University of Szczecin, ul. Wielkopolska 15, 70-451 Szczecin, Poland }
\email{ piotrkras26@gmail.com}
\subjclass[2010]{Primary 19Fxx; Secondary 11Gxx,14H40.}
\keywords{algebraic curve, {\'e}tale $K$-theory, curve, Hasse principle}


\maketitle

\section{Introduction} 

The local to global type questions are of mathematical interest since the celebrated Hasse principle was proven. For the history of these type of  problems in number theory and its extensions 
to the context of abelian varieties and linear algebraic groups see \cite{fk17}.

In \cite{bk13} G. Banaszak and the  author proved a sufficient condition for the local to global condition to hold for {\'e}tale $K$-theory of curves. This was done via analysis of the reduction map in the suitable Galois cohomology.
 The main result of the current   paper is Theorem \ref{best}. It asserts, for the local to global questions considered there, that if the sufficient  condition  is not fulfilled then one can produce   a counterexample. This gives some evidence that the criterion obtained in \cite{bk13} is the best possible  (cf. Remark \ref{best1}).

To state the  aforementioned local to global principle for {\'e}tale $K$-theory of a curve let us recall the following definitions and notations from \cite{bk13}.
Let $X/F$ be a smooth, proper and geometrically irreducible curve of genus $g$ defined over a number field $F$ and let $J$ be the Jacobian  of $X.$

\begin{dfn}
We call a finite field extension $F^{\prime} / F$ an {\bf isogeny splitting field} of the Jacobian $J$ if $J$ is isogenous 
over $F^{\prime}$ to the product  $A_{1}^{e_{1}}\times \dots \times A_{t}^{e_{t}}$ where $A_1, \dots, A_t$ are pairwise nonisogenous, 
absolutely simple abelian varieties defined over $F^{\prime}$.
\label{definition of an isogeny splitting field}\end{dfn}

\begin{rem} Notice that $J$ is an abelian variety and therefore by Poincar{\'e} decomposition theorem the isogeny  splitting field exists.
\end{rem}
Let  ${\tilde V}_{l,\, i}$ denote $T_{l}(A_{i})(n)\otimes_{\Z_l} {\mathbb Q}_{l},$ where  $T_{l}(A_{i})(n)$ is the $n$-th twist of the $l$-adic Tate module of $A_{i}$ (cf. \cite{t68}).

\medskip

The main result of \cite{bk13} is the following theorem: 
\begin{thm}\label{theorem 1.1}
Let $X/F$ be a smooth, proper and geometrically irreducible curve of genus $g.$ Let $F^{\prime}$ be an isogeny splitting field of the Jacobian $J$ and 
assume that for the corresponding product $A_{1}^{e_{1}}\times \dots \times A_{t}^{e_{t}},$ we have $End_{{\overline F}} \, A_{i}={\mathbb Z}$ for each $1 \leq i \leq t.$ Let $l > 2$ be a prime number which is coprime to the polarisation degrees of the abelian varieties 
$A_i.$ Let $S_{l}$ be a set of places of $F$ containing the places of bad reduction, the  
archimedean places and the primes above $l.$ Let  ${\cal X}$ be a regular and proper model of $X$ over ${\cal O}_{F, S_l}.$ 
Assume that ${{dim}}_{{\mathbb Q}_{l}}{\tilde V}_{l,\, i} \, \, \geq e_{i}$ for each $1 \leq i \leq t.$ 
 Let $\hat{P} \in K_{2n}^{et}({\cal X})$ and let $\hat{\Lambda}$ be a finitely generated 
 ${\mathbb Z}_{l}$-submodule of $ K_{2n}^{et}({\cal X}).$ If $r_v (\hat{P}) \in
 r_v (\hat{\Lambda})$ for almost all $v$ of  ${\cal O}_{F,S_{l}}$ then $\hat{P} \in 
 \hat{\Lambda} + K_{2n}^{et}({\cal X}) _{tor}.$
\end{thm}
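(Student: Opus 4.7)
The natural strategy is to peel off layers one at a time: first pass from \'etale $K$-theory of $\mathcal{X}$ to Galois cohomology of the Tate module of the Jacobian $J$, then use the isogeny decomposition to reduce to each absolutely simple factor $A_i^{e_i}$, and finally apply a support/detection argument for points in an abelian variety whose Galois image is sufficiently large.

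\textbf{Step 1: Reduction to Galois cohomology of $T_l(J)(n)$.} I would invoke the Dwyer--Friedlander (equivalently, Thomason) descent spectral sequence
\[
E_2^{p,q} = H_{et}^{p}(\mathcal{X},\Z_l(\tfrac{-q}{2})) \ \Longrightarrow\ K^{et}_{-p-q}(\mathcal{X}),
\]
which, since $\mathcal{X}$ is a regular proper arithmetic surface over $\mathcal{O}_{F,S_l}$, has bounded columns. Picking out the contribution to $K^{et}_{2n}$ and combining with Hochschild--Serre for $\mathcal{X} \to \Spec \mathcal{O}_{F,S_l}$, the essential non-torsion content of $K^{et}_{2n}(\mathcal{X})$ is controlled by $H^{1}(G_{F,S_l}, T_l(J)(n+1))$ through the $H^1_{et}(X_{\ov F},\Z_l) \simeq T_l(J)$ identification. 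The reduction maps $r_v$ correspond under this identification to the evaluation maps at Frobenius classes. So if $r_v(\hat P) \in r_v(\hat\Lambda)$ for almost all $v$, I get the analogous local-to-global statement in $H^{1}(G_{F,S_l}, T_l(J)(n+1))$, modulo the kernel of the edge map, which lands in $K^{et}_{2n}(\mathcal{X})_{tor}$.

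\textbf{Step 2: Reduction via the isogeny to simple factors.} After base-changing to $F'$ (and enlarging $S_l$ finitely), the Poincar\'e decomposition and the hypothesis that $l$ is coprime to the polarisation degrees give a Galois-equivariant decomposition
\[
T_l(J_{F'})(n+1) \ \simeq\ \bigoplus_{i=1}^{t} T_l(A_i)(n+1)^{\oplus e_i}.
\]
The behaviour of the reduction maps is compatible with the projections, so the problem becomes: given $\hat P_i \in H^{1}(G_{F',S_l}, T_l(A_i)(n+1)^{\oplus e_i})$ and a finitely generated $\Z_l$-submodule whose local reductions capture $\hat P_i$ almost everywhere, conclude that $\hat P_i$ lies in that submodule up to torsion. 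The assumption $\End_{\ov F}(A_i)=\Z$ lets me apply Bogomolov's theorem: the image of the Galois representation on $T_l(A_i)\otimes\Q_l$ contains the homotheties $\Z_l^{\times}\cdot\mathrm{Id}$, so the Lie algebra of the image is an irreducible $\Z_l$-Lie subalgebra of $\mathfrak{gl}(V_{l,i})$ of the largest possible type.

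\textbf{Step 3: Linear-algebraic detection on each factor.} The dimension hypothesis $\dim_{\Q_l}\tilde V_{l,i}\geq e_i$ is exactly what allows one to find, by an open-image/Chebotarev argument, Frobenius elements $\Fr_{v}$ whose actions on the $e_i$-fold sum $V_{l,i}^{\oplus e_i}$ have spectra sufficiently generic to detect linear dependence. Concretely, one exhibits places $v_1,\dots,v_N$ such that the simultaneous evaluation map to $\prod_j H^{1}(G_{v_j}, T_l(A_i)(n+1)^{\oplus e_i})$ restricted to $\hat P_i + \langle\hat\Lambda_i\rangle$ is injective modulo torsion. The standard argument (as in the abelian variety analogue of the Kummer-theoretic local-to-global statement) then forces $\hat P_i$ into $\hat\Lambda_i$ plus torsion. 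Reassembling across $i$, pulling back from $F'$ to $F$ (which only costs a finite index, absorbed in the torsion), and invoking Step 1 gives the conclusion.

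\textbf{Main obstacle.} The delicate point is Step 3: ensuring that the dimension hypothesis on $\tilde V_{l,i}$ really does produce enough generic Frobenius elements to detect membership in $\hat\Lambda$ in the $e_i$-fold sum, not merely in a single copy. This requires carefully combining Bogomolov's result with the absolute simplicity and $\End_{\ov F}(A_i)=\Z$ assumptions to prove that the image of Galois in $\GL(V_{l,i})^{e_i}$ is ``diagonally large enough'' in the sense that there exist matrices whose characteristic polynomials separate any given finite configuration of elements. Everything else --- the descent spectral sequence, the isogeny decomposition, and the torsion bookkeeping --- is, by comparison, formal once the base has been set up correctly with the hypothesis $l>2$ coprime to polarisation degrees and $S_l$ containing the bad primes.
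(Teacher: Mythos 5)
The theorem you were asked to prove is not actually proved in this paper: it is the main result of \cite{bk13}, and the present paper only restates it as background (the paper's own contribution is the converse, Theorem \ref{best}, together with examples and a dynamical variant). So there is no internal proof to compare against; I can only evaluate your sketch against what the surrounding text reveals about the structure of the argument in \cite{bk13}.

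Your overall architecture is right and matches the machinery this paper sets up: pass from $K_{2n}^{et}(\mathcal{X})$ to $H^{1}(G_{F,S_l}, T_l(J)(n))$ via the Dwyer--Friedlander spectral sequence (the paper records this as the exact sequence (\ref{diagram 2.1}) and the diagram (\ref{diagram 2.4}), whose left vertical arrow is a surjection with finite kernel, which absorbs the torsion ambiguity); use the isogeny splitting field to decompose the target as $\prod_i H^{1}(G_{F,S_l}, T_l(A_i)(n))^{e_i}$ as in (\ref{The Zl module}); then detect linear independence by reduction at well-chosen primes using large Galois image. A small bookkeeping slip: because of the index shift in (\ref{diagram 2.1}) (coefficients $\Z_l(k)$ in $H^2_{cts}$ give $T_l(J)(k-1)$ in $H^1$), the relevant twist is $T_l(J)(n)$, not $T_l(J)(n+1)$.

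The real issue is that your Step 3 is where the entire content of the theorem lives and you leave it as an assertion. The phrase ``Frobenius elements whose spectra are sufficiently generic to separate any finite configuration'' does not yet constitute an argument, and the specific numerical hypothesis $\dim_{\Q_l}\tilde V_{l,i}\ge e_i$ is not most naturally read as a genericity-of-spectra condition. Rather, as the proof of Theorem \ref{best} in this paper makes explicit from the other direction, the number $\dim_{\Q_l}\tilde V_{l,i} = 2\dim A_i$ bounds the number of generators of the $l$-primary part of $A_i(k_v)$ (equivalently of $H^1(g_v, T_l(A_{i,v})(n))$), so $e_i$ independent global sections in the $e_i$-fold sum can only be seen independently after reduction when $e_i\le 2\dim A_i$. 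The sufficiency proof in \cite{bk13} runs through a precise Reduction Theorem (cf. Remark \ref{rmk}, citing \cite{bk11}, and Corollary 3.5 of \cite{bk13}, cited in Lemma \ref{kg}) that prescribes the $l$-adic orders of reductions of independent points, and that theorem is what combines the Bogomolov open-image input with the dimension bound. Without stating and proving (or at least invoking) that Reduction Theorem, your Step 3 is a gap: it names the right ingredients (Bogomolov, $\End_{\ov F}A_i=\Z$, Chebotarev) but does not show how the inequality $\dim_{\Q_l}\tilde V_{l,i}\ge e_i$ actually enters to produce the separating primes.
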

Theorem \ref{theorem 1.1} is equivalent with  the following (cf. diagram (\ref{diagram 2.4})  and \cite{bk13} especially section 4):
\begin{thm}\label{theorem 1.2}
Let $X/F,$  be a curve and $S_l$ the set of primes as in Theorem \ref{theorem 1.1}. Let $F^{\prime}$ be  an isogeny splitting field i.e. $J$ is isogenous over $F^{\prime}$ to 
$ A_1^{e_1}\times\dots\times A_t^{e_t}$  and  for all $1\leq i \leq t$ we have  $End_{{\overline F}} \, A_{i}={\mathbb Z}$ and
 ${{dim}}_{{\mathbb Q}_{l}}{\tilde V}_{l,\, i} \, \, \ge e_{i}.$ Let ${\hat P}\in H^{1}(G_{S_{l}}; {T}_{l}(J)(n))$ and  ${\hat\Lambda}$ be a finitely generated 
 ${\mathbb Z}_{l}$-submodule  of $H^{1}(G_{S_{l}}; {T}_{l}(J)(n)).$ If $r_v (\hat{P}) \in
 r_v ({\hat\Lambda})\subset H^{1}({g}_{v}; {T}_{l}(J_{v})(n )) $ for almost all $v$ of  ${\cal O}_{F,S_{l}}$ then $\hat{P} \in 
 \hat{\Lambda} + H^{1}(G_{S_{l}}; {T}_{l}(J)(n))_{tor}.$
\end{thm}
For the definition of $G_{S_{l}}$ and $g_v$ see section 3.

We prove the following theorem:
\begin{thm}\label{best}
Let $A=A_1^{e_1} \times\dots \times A_t^{e_{t}}$ be an abelian variety such that $End_{{\overline F}} \, A_{i}={\mathbb Z}$ for all $i.$  Let $S_l$ be the set of primes containing archimedean places, primes of bad reduction of $A$ and primes over $l.$ Assume that   ${\mathrm{rank}}A_i( F)\ge e_i$ and 
 ${{dim}}_{{\mathbb Q}_{l}}{\tilde V}_{l,\, i} \, \, < e_{i}$ for some $1\leq i \leq t.$    
 Then:
 \begin{enumerate}
 \item
 the local to global principle for the map $H^{1}(G_{S_{l}}; {T}_{l}(A))\rightarrow H^{1}({g}_{v}; {T}_{l}(A_{v}))$ does not hold i.e. there exists $\Lambda \subset H^{1}(G_{S_{l}}; {T}_{l}(A))$ and $P\in H^{1}(G_{S_{l}}; {T}_{l}(A))$ such that $r_v(P)
\in r_v(\Lambda) $ but $P\notin\Lambda+H^{1}(G_{S_{l}}; {T}_{l}(J)(n))_{tor}$  
  \item 
  the local to global principle for the map $H^{1}(G_{F_{{l^{\infty},S_{l}}}}; {T}_{l}(A)(n))\rightarrow H^{1}({g}_{v}; {T}_{l}(A_{v})(n)),$   where $F_{{l^{\infty},S_{l}}}:={\bigcup}_{n=1}^{\infty}F_{S_{l}}({\mu}_{l^{n}})$ and ${\mu}_{l^{n}}$ is the $l^n$-th root of unity, does not hold.
   \end{enumerate}
\end{thm}

\begin{rem}
Without loss  of generality in the  proof of  Theorem \ref{best} we may assume $F^{\prime}=F$.  From now on we assume this.
\end{rem}
\begin{rem}\label{best1}
The reason that we could not  produce a counterexample for an arbitrary Tate twist i.e. for the  map 
$r_v: H^{1}(G_{{S_{l}}}; {T}_{l}(A)(n))\rightarrow H^{1}({g}_{v}; {T}_{l}(A_{v})(n))$ is the difficulty 
in estimating the rank of $H^{1}(G_{S_{l}}; {T}_{l}(A)(n)).$ However, if this rank is sufficiently big then, similar to that given in section 6, proof shows that  the corresponding to (1) result holds true. Then abelian varieties from Example \ref{ex6} yield, if the numeric condition of Theorem \ref{theorem 1.1} is violated, counterexamples to the local to global principle for even {\'e}tale  $K$-theory of a curve.
\end{rem}

In section 2 we give some examples of Jacobians of curves satisfying the assumptions of Theorem \ref{theorem 1.1} and examples satisfying assumptions of Theorem \ref{best}. Sections 3, 4 and 5 contain necessary background from Galois cohomology, intermediate Jacobians and
Kummer theory. In section 6 we prove our main theorem (Theorem \ref{best}).  
Section 7 is devoted to proof of the dynamical version of the local to global principle. This is done by checking the axioms for the dynamical local to global principle introduced in \cite{b17}. We check these axioms for Galois 
cohomology (cf. Lemma \ref{kg})  and then pass to the {\'e}tale $K$-theory. We also show that we can replace {\'e}tale $K$-theory by Quillen theory of a curve provided Bass and Quillen-Lichtenbaum conjectures hold true.

\section{Examples}
In this section we give examples of curves whose Jacobian decomposes as $A_1^{e_1}\times \dots A_t^{e_t},$  with ${\End}_{\overline F}A_i={\mathbb Z}.$
In general deciding whether the Jacobian of a curve  splits is a difficult problem and has vast literature (cf. e.g. \cite{hn65}, \cite{es93}).
However, we have the following \cite{co12}, p.589:

\begin{thm}\label{isogeny}
Any abelian variety $A$ defined over $\overline{\mathbb Q}$ of dimension $1\leq g\leq 3$ is isogenous to $J(C)$, where $C$ is a curve of genus $g$.
\end{thm}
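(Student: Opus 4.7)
The plan is to proceed by dimension $g$, in each case first reducing to the principally polarized situation. Since every abelian variety $A$ over $\overline{\mathbb{Q}}$ carries a polarization, and any polarized abelian variety is isogenous to a principally polarized one (for instance by quotienting by a maximal isotropic subgroup of the kernel of the polarization, or via Zarhin's trick), I may assume without loss of generality that $A$ is a ppav. The underlying reason the theorem is restricted to $g\leq 3$ is the dimension count $\dim \mathcal{M}_g = 3g-3 = g(g+1)/2 = \dim \mathcal{A}_g$ for $g \in \{2,3\}$ (together with $\mathcal{M}_1 = \mathcal{A}_1$), which guarantees that the Torelli image is Zariski dense in the moduli space of ppavs.

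The case $g = 1$ is immediate since an elliptic curve equals its own Jacobian. For $g=2$, I would invoke Weil's refinement of Torelli: every indecomposable ppav of dimension two is the Jacobian of a smooth genus-two curve. If instead $A$ is isogenous to a product $E_1 \times E_2$ of elliptic curves, I would apply the Frey--Kani construction: over $\overline{\mathbb{Q}}$ one chooses an anti-symplectic isomorphism $\phi\colon E_1[2] \to E_2[2]$ (there is no Galois obstruction) and forms the quotient $(E_1 \times E_2)/\Gamma_\phi$ of the product by the graph of $\phi$. For a generic $\phi$ this ppav is the Jacobian of a smooth genus-two curve, and by construction it is isogenous to $E_1\times E_2$, hence to $A$.

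For $g = 3$, I would combine the Oort--Ueno classification (every indecomposable ppav of dimension three is the Jacobian of a smooth genus-three curve, non-hyperelliptic generically and hyperelliptic on a divisor) with an analogous gluing procedure for decomposable ppavs, which over $\overline{\mathbb{Q}}$ fall into the shapes $E \times J(C')$ (with $C'$ of genus two) and $E_1 \times E_2 \times E_3$. One glues the summands along an isotropic common torsion subgroup, chosen freely since no Galois cocycle obstruction survives over $\overline{\mathbb{Q}}$, producing a ppav of dimension three isogenous to $A$. The main obstacle I anticipate lies precisely in these decomposable $g=3$ cases: one must verify that for a sufficiently generic choice of gluing data the resulting quotient ppav lands in the open Jacobian stratum of $\mathcal{A}_3$ rather than in the closed locus of reducible ppavs, i.e.\ that the associated theta divisor is irreducible so that the corresponding curve is smooth of the correct genus. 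This is a geometric genericity argument combined with the density of the Jacobian locus, rather than a purely formal computation.
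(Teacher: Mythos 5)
The paper does not prove this theorem; it quotes it from Chai--Oort \cite{co12}, p.~589, and the accompanying remark attributes the case $g=1$ to triviality, $g=2$ to Weil \cite{w57} Satz~2, and $g=3$ to Theorem~4 of Oort--Ueno \cite{ou73}. Note that those references already contain the full isogeny statement; what you actually invoke from Oort--Ueno is only the Torelli-type input (indecomposable ppav of dimension $\leq 3$ is a Jacobian), so your sketch is in effect an attempt to re-derive their Theorem~4 from that weaker input. The architecture you propose --- replace $A$ by an isogenous ppav, apply Torelli when that ppav is indecomposable, and glue the factors along torsion when it is decomposable --- is correct and is indeed how the classical arguments proceed.

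The gap is the one you flag, and it is exactly the content you set out to re-derive, so it cannot be left as a remark. Density of the Jacobian locus in $\mathcal{A}_g$ is of no help: the isogeny class of a fixed $A$ is a countable set of points in $\mathcal{A}_g$, so nothing is gained by knowing Jacobians are Zariski dense. Likewise ``for a generic $\phi$'' is not available, because the admissible gluing data (anti-isometries between finite torsion subgroups such as $E_1[2]\to E_2[2]$) form a \emph{finite} set. What is needed is an existence statement: among the finitely many admissible gluings, at least one yields an irreducible theta divisor. Already at $g=2$ this requires determining exactly which $\phi$ make $(E_1\times E_2)/\Gamma_\phi$ degenerate back to a product of elliptic curves (roughly, those induced by an isomorphism $E_1\to E_2$) and verifying that these do not exhaust the anti-isometries, with extra care when $E_1\cong E_2$ has complex multiplication. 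At $g=3$ the same check must be run across the decomposition types $E_1\times E_2\times E_3$ and $E\times J(C')$, including repeated and CM factors. That finite case analysis is precisely what Weil's Satz~2 and Oort--Ueno's Theorem~4 supply, and until it is carried out your decomposable cases are unproven --- which is presumably why the paper cites the result rather than reproving it.
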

\begin{rem}
Genus $1$ case of Theorem \ref{isogeny} is trivial, genus $2$ is \cite{w57} Satz 2 and $g=3$ is Theorem 4  of \cite{ou73}.
\end{rem}

\begin{ex}\label{ex1}
Let $A/F$ be principally polarized abelian surface with ${\End_{{\overline F}}}A={\mathbb Z}$ then by Theorem \ref{isogeny} there exists a curve $X$ defined over $F^{'} \subset {\overline Q}$ such 
J(X) is isogenous to $A.$
\end{ex}
\begin{ex}\label{Ex2}
Let $B=A\times E$, where $A$ is as in Example \ref{ex1} and $E$ is an elliptic curve without complex multiplication then there exists a curve $X$ such that J(X) is isogenous to $B.$ 
\end{ex}
A special case of Example \ref{ex1} is given in the following (see \cite{bpp18} section 7.1 p. 39):
\begin{ex}\label{ex3}
Let $X/{\mathbb Q}$ be a smooth projective curve given by the following equation
\begin{equation}\label{ex4}
X:  \quad y^2 +(x^3+x^2 +x+1)y=-x^2-x.
\end{equation}
Then J(X)  is a principally polarized abelian isotypical  surface over ${\mathbb Q}$ of conductor 277.
\end{ex}
\begin{rem}\label{rmm} Since the ${\mathbf j}$ invariant of a curve with complex multiplication is an algebraic integer (cf.  \cite{c89}, Theorem 11.1 ), in Example \ref{Ex2} it is enough to pick an elliptic curve defined over ${\mathbb Q}$ whose ${\mathbf j}$ invariant is non-integral rational number.
\end{rem}
The following lemma        guarantees that there are  many simple abelian surfaces $A$ with ${\End}_{\overline F} A={\mathbb Z}$
\begin{lem}(\cite{bpp18}, Lemma 4.1.1)
Let $A$ be a simple, semistable abelian surface over ${\mathbb Q}$ with non-square conductor then $A$ is isotypical, i.e. ${\End}_{\overline{\mathbb Q}}A={\mathbb Z}$.
\end{lem}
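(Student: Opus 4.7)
The plan is to prove the contrapositive: if $A/\Q$ is a $\Q$-simple semistable abelian surface with $\End_{\overline{\Q}}(A)\supsetneq\Z$, then the conductor $N(A)$ is a square. The strategy is to use Albert's classification to enumerate the possibilities for $\End^{0}_{\overline{\Q}}(A)$, then in each case to pin down the local conductor exponents $f_{p}(A)$ using the compatibility of the extra endomorphisms with the Galois action on the Tate module.

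Assume first that $A$ is $\overline{\Q}$-simple. By Albert's classification in dimension two, $D:=\End^{0}_{\overline{\Q}}(A)$ is either $\Q$, a real quadratic field $K$ (RM), an indefinite quaternion algebra $B$ over $\Q$ (QM), or a quartic CM field $L$ (CM); the hypothesis excludes $\Q$. Fix a prime $p$ of bad reduction and an auxiliary prime $\ell\neq p$, and work with the rational Tate module $V_{\ell}(A)$, which is free of rank $2g/[D:\Q]$ over $D\otimes\Q_{\ell}$, hence of rank $2$ in the RM case and rank $1$ in the QM and CM cases. The $D$-action commutes with the absolute Galois group of a finite extension $F/\Q$ over which $D$ is defined, so for any prime $P$ of $F$ above $p$ the inertia-invariant subspace $V_{\ell}(A)^{I_{P}}$ is a $D\otimes\Q_{\ell}$-submodule of $V_{\ell}(A)$. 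Enumerating such submodules gives $\dim_{\Q_{\ell}}V_{\ell}(A)^{I_{P}}\in\{0,4\}$ in the CM and QM cases and $\dim_{\Q_{\ell}}V_{\ell}(A)^{I_{P}}\in\{0,2,4\}$ in the RM case. Since $A_{F}$ is still semistable and of dimension $2$, the toric rank at $P$ is at most $2$, so $f_{P}(A_{F})\in\{0,2\}$ in every case. Under the unramifiedness of $F/\Q$ at $p$ (forced by semistability, see below) one has $f_{p}(A)=f_{P}(A_{F})$, whence $N(A)=\prod_{p}p^{f_{p}(A)}$ is a square; in fact $N(A)=1$ in the QM and CM cases.

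When $A$ is $\Q$-simple but not $\overline{\Q}$-simple, $A_{\overline{\Q}}$ decomposes geometrically as a product of two elliptic curves. If these factors are nonisogenous, then $A$ is isogenous to $\Res_{F/\Q}(E)$ for an elliptic curve $E$ over a quadratic $F/\Q$ with $\Gal(F/\Q)$ swapping the geometric factors; semistability of $\Res_{F/\Q}(E)$ at a rational prime $p$ forces $F/\Q$ to be unramified at $p$, and Minkowski's theorem on the nonexistence of everywhere unramified extensions of $\Q$ makes this configuration vacuous. Otherwise $\End^{0}_{\overline{\Q}}(A)\cong M_{2}(\Q)$ or $M_{2}(K)$ for an imaginary quadratic $K$, and $A$ is a $\Q$-form of a QM or CM abelian surface, to which the analysis of the previous paragraph applies.

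The main obstacle I anticipate is the descent step: when $D$ is realized only over a proper extension $F/\Q$, one needs to verify that $F/\Q$ is unramified at every bad prime of $A$, so that the bound on $f_{P}(A_{F})$ from the Tate module analysis transfers to a bound on $f_{p}(A)$. Making this compatibility between the Galois action on $D$ and the semistability of $A$ precise is the delicate step; I would expect to derive it from an explicit analysis of the monodromy filtration on $V_{\ell}(A)$ together with the rigidity of semistable Néron models under base change. A secondary technical point is the $\ell$-independence of $f_{p}(A)$, which allows one to choose $\ell$ conveniently (for instance, inert in $K$ in the RM case, or split in $B$ in the QM case) to simplify the submodule enumeration.
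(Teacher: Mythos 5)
The paper does not prove this lemma; it is cited verbatim from \cite{bpp18}, so there is no ``paper's own proof'' to compare against. Your strategy --- proving the contrapositive via Albert's classification and bounding the conductor exponents through the $D\otimes\Q_\ell$-module structure of $V_\ell(A)^{I_p}$ --- is the right one and is essentially the argument of \cite{bpp18}. Two corrections and one decisive simplification.

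The descent obstacle you flag at the end is in fact dissolved by exactly the mechanism you use in the geometrically non-simple case, and you should apply it uniformly rather than treating the simple case as a separate worry. The Galois action of $G_\Q$ on the finitely generated $\Z$-module $\End(A_{\overline{\Q}})$ factors through $\Gal(F/\Q)$ for a finite $F$; on $\End^0(A_{\overline{\Q}})\otimes\Q_\ell\subset\End(V_\ell(A))$ this action is by conjugation by $\rho(\sigma)$, which is unipotent for $\sigma$ in inertia at a semistable prime. A finite group mapping into a pro-$\ell$ unipotent group (for $\ell$ prime to its order) maps trivially, so $F/\Q$ is unramified at every bad prime, and at good primes inertia is trivial on $V_\ell(A)$ by N\'eron--Ogg--Shafarevich. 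Minkowski then gives $F=\Q$. This is Ribet's theorem that a semistable abelian variety over $\Q$ has all its geometric endomorphisms defined over $\Q$. Once you have it, $\End^0_{\overline{\Q}}(A)=\End^0_{\Q}(A)$ is a division algebra because $A$ is $\Q$-simple, which immediately kills the geometrically split cases (since $M_2(\Q)$ and $M_2(K)$ are not division), and the whole second paragraph of your proof collapses.

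Two smaller points. First, your dimension count for the QM case is off when $\ell$ is split in $B$: a $B\otimes\Q_\ell\cong M_2(\Q_\ell)$-submodule of a free rank-one module can have $\Q_\ell$-dimension $0$, $2$, or $4$, not only $0$ or $4$; the conclusion $f_p\in\{0,2\}$ survives, since $f_p=2g-\dim V_\ell^{I_p}\leq g=2$ by semistability, but the intermediate claim as written is wrong. Second, the QM and CM cases are sharper than ``even conductor exponent'': in both cases the division algebra cannot act on the rank-$\leq 2$ character group of the toric part of the special fiber, forcing toric rank zero at every prime, hence good reduction everywhere once semistability is given, which is impossible by Fontaine's theorem. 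This is cleaner than concluding $N(A)=1$ from the Tate-module count and then invoking nonexistence.
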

Now we can give  examples of abelian varieties satisfying the assumptions of Theorem \ref{best}
\begin{ex}\label{ex6}
In Example \ref{Ex2} take $A$ to be any simple abelian surface $A$ over ${\mathbb Q}$ with ${\End}A={\mathbb Z}$ and $E/{\mathbb Q}$ to be any elliptic curve without complex multiplication  such that ${\mathrm{rank}}E(\mathbb Q)>2.$ 
One specific choice of that sort will be $A=J(X)$ where $X$ is a curve from Example \ref{ex3} and $E$ the elliptic curve 
(listed in Cremona tables) of conductor  5077  with the coefficients  $[a_1,a_2, a_3. a_4, a_6]=[0,0,1,-7,6]$ and the Mordell-Weil group over ${\mathbb Q}$ of rank 3. One readily verifies that the $j$-invariant is not a rational integer, so this curve has no CM (cf. Remark \ref{rmm}).

\end{ex}

\section{necessary results concerning {\'e}tale $K$-theory of curves and cohomology}

We start with the definition of continuous cohomology \cite{j88} and \cite{df85}. 
\begin{dfn}
Let $Y$ be a scheme over $\Z[\frac{1}{l}]$ and let $({\cal F}_m)$ be a projective system of 
${\Bbb Z}/{l^{m}}$ - {\' e}tale sheaves ${\cal F}_m$. The functor $({\cal F}_m) \rightarrow 
\varprojlim H^{0}(Y,\, ({\cal F}_m))$ is left exact and its $i$-th right derived functor is by definition the continuous cohomology group $H^{i}_{cts}(Y,\, ({\cal F}_m)).$\end{dfn}
\medskip

The {\' e}tale K-theory spaces and groups were defined in \cite{df85} by W. Dwyer and E. Friedlander.
In \cite{bgk99} using the spectral sequences of \cite{df85}:
$$E_{2}^{p,-q} = H^{p}_{cont} (Y, \, \Z_l ({q}\, {/}{2})) \Rightarrow K_{q-p}^{et} (Y),$$
$$E_{2}^{p,-q} = H^{p}_{et} (Y, \, \Z/l^k ({q}\, {/}{2})) \Rightarrow K_{q-p}^{et} (Y, \, \Z/l^k)\,,$$
it was shown that
one has the following  exact sequence connecting continuous cohomology of $\cal X$ and  the 
Galois cohomology of the Galois group of the maximal unramified outside $S_{l}$ extension of $F:$ 

\begin{equation}\label{diagram 2.1}
0\rightarrow H^{2}(G_{S_{l}} ; {\Bbb Z}_{l}(k)) \rightarrow H^{2}_{cts}({\cal X}; {\Bbb Z}_{l}(k)) \rightarrow H^{1}(G_{S_{l}} ; {T}_{l}(J)(k-1))\rightarrow 0 \, ,
\end{equation}
as well as existence of the following commutative diagram:

\begin{equation}\label{diagram 2.4}
\xymatrix{
K_{2n}^{et}({\cal X}) \ar[d] \ar[r]^{r_v} & K_{2n}^{et}({\cal X}_{v}) \ar[d]^{\cong} \\
H^{1}(G_{S_{l}}; {T}_{l}(J)(n)) \ar[r]^{r_v}&  H^{1}({g}_{v}; {T}_{l}(J_{v})(n ))\, ,
}
\end{equation} where $g_v:=Gal({\bar{k_v}}/k_v).$
The right hand vertical arrow in the diagram  (\ref{diagram 2.4}) is an isomorphism whereas the left vertical arrow is an epimorphism with finite kernel (cf. \cite{bk13}, \cite{bgk99}, \cite{bg08}).

One has the following Dwyer-Friedlander homomorphisms, for odd prime $l$ and $k\geq 1$, connecting Quillen $K$-theory  with the {\'e}tale $K$-theory with coefficients (cf. \cite{df85} ):
\begin{equation}\label{dfr}
h_{l^k} : K_m({\cal X}, {\mathbb Z}/{l^k{\mathbb Z}}) \rightarrow  K_m^{et}({\cal X}, {\mathbb Z}/{l^k{\mathbb Z}}), 
\end{equation}
\begin{equation}\label{dfr0}
{\overline{h_{l^k}}} : K_m({\cal X}_{v}, {\mathbb Z}/{l^k{\mathbb Z}}) \rightarrow  K_m^{et}({\cal X}_{v}, {\mathbb Z}/{l^k{\mathbb Z}}).
\end{equation}

In the sequel we assume the following: 
\begin{conj}\label{Bass}(Bass conjecture)
The Quillen $K$-theory groups $K_m({\cal X})$ are finitely generated for $m>0$.
\end{conj}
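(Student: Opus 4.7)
The plan is to reduce finite generation of $K_m(\mathcal{X})$ to finite generation of motivic cohomology via the Bloch-Lichtenbaum / Friedlander-Suslin spectral sequence
$$E_2^{p,q} = H^{p-q}_{\mathcal{M}}(\mathcal{X}, \mathbb{Z}(-q)) \Rightarrow K_{-p-q}(\mathcal{X}),$$
which for the regular two-dimensional scheme $\mathcal{X}$ converges strongly, and in each fixed total degree has only finitely many potentially nonzero $E_2$-terms. Thus finite generation of $K_m(\mathcal{X})$ for all $m>0$ is implied by finite generation of each $H^i_{\mathcal{M}}(\mathcal{X},\mathbb{Z}(j))$.

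Next I would exploit the localization long exact sequence associated with the decomposition of $\mathcal{X}$ into its generic fiber $X_F$ and its closed fibers $\mathcal{X}_v$ for $v\notin S_l$:
$$\cdots \to \bigoplus_v K_m(\mathcal{X}_v) \to K_m(\mathcal{X}) \to K_m(X_F) \to \bigoplus_v K_{m-1}(\mathcal{X}_v) \to \cdots$$
Each $\mathcal{X}_v$ is a smooth projective curve over a finite field, and Quillen's computations give that $K_m(\mathcal{X}_v)$ is finite for $m\geq 1$. Since the image of any element of $K_m(X_F)$ under the boundary map is supported on a finite set of $v$, only finitely many summands can contribute to a given class. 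This reduces the problem to showing that $K_m(X_F)$ is finitely generated, where $X_F$ is a smooth proper curve over the number field $F$.

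For $K_m(X_F)$ I would rerun the motivic spectral sequence, reducing to finite generation of the groups $H^p_{\mathcal{M}}(X_F,\mathbb{Z}(q))$. The classically accessible cases are $\mathbb{Z}$ in weight zero, $\mathcal{O}(X_F)^{\times}$ in weight one (finitely generated by Dirichlet applied to the function field arithmetic), and $\Pic(X_F)$ (finitely generated via the Mordell-Weil theorem for $J(F)$ together with finite generation of the N{\'e}ron-Severi group). The remaining task, which dominates the proof, is to handle the higher-weight groups $H^p_{\mathcal{M}}(X_F,\mathbb{Z}(q))$ for $q\geq 2$.

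The hard part, and the reason the statement is formulated as a conjecture rather than proved outright, lies precisely here. The natural attack is to use the {\'e}tale cycle class map combined with the Beilinson-Lichtenbaum theorem (Voevodsky-Rost) to identify the mod-$\ell$ reductions of these motivic groups with {\'e}tale cohomology of $X_F$, which is known to be finite. The remaining obstruction is then to bound the uniquely divisible part and to control the integral ranks. Pinning these down essentially requires Beilinson-type regulator inputs, and it is at this step that no unconditional argument is presently available; a complete proof would need genuinely new arithmetic input beyond the formal spectral-sequence reductions sketched above, which is why the paper takes the conjecture as an assumption rather than supplying a proof.
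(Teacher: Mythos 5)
The statement you were asked about is not a theorem of the paper at all: it is the Bass conjecture, which the paper explicitly labels a conjecture and simply \emph{assumes} (together with Quillen--Lichtenbaum) in order to transfer its results from {\'e}tale to Quillen $K$-theory of ${\cal X}$ via the Dwyer--Friedlander maps. There is therefore no proof in the paper to compare yours against, and no proof is currently known to exist. Your proposal is, accordingly, not a proof and does not claim to be one; its honest final paragraph, conceding that finite generation of the higher-weight motivic cohomology of $X_F$ is out of reach without new arithmetic input (Beilinson-type regulator information), is exactly the right conclusion. That admission is the genuine gap, and it is an unavoidable one rather than an oversight on your part.

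As for the reduction steps you do carry out, they are the standard and essentially correct way one would organize an attack: the motivic (Friedlander--Suslin) spectral sequence reduces finite generation of $K_m({\cal X})$ to finite generation of motivic cohomology; the localization sequence for ${\cal X} \to \Spec {\cal O}_{F,S_l}$ (using regularity to identify $K$- and $K'$-theory) reduces to the generic fiber, since the special fibers are smooth projective curves over finite fields whose higher $K$-groups are finitely generated by Quillen--Grayson and in fact finite in positive degrees once one knows the ranks vanish (Harder); and the low-weight motivic groups of $X_F$ are handled by Dirichlet, Mordell--Weil and finiteness of the relevant pieces of $\Pic$. One small caution: the finiteness of $K_m({\cal X}_v)$ for $m \geq 1$ is not a formal ``computation'' of Quillen alone but combines finite generation with the vanishing of ranks, so the attribution should be stated more carefully if this sketch were ever written up. None of this changes the verdict: the statement remains a conjecture, the paper uses it only as a hypothesis (see Remark \ref{finrem}), and your proposal correctly stops where every known approach stops.
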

Assuming Conjecture {\ref{Bass} we obtain the following equality:
\begin{equation}
\varprojlim K_m({\cal X},{\mathbb Z}/{l^k{\mathbb Z}})=K_m({\cal X})\otimes {\mathbb Z}_l.
\end{equation}

The Dwyer-Friedlander homorphisms (\ref{dfr}) and (\ref{dfr0}) induce the following homomorphisms:
\begin{equation}\label{dfr1}
h : K_{2n}({\cal X})\otimes {\mathbb Z}_l \rightarrow  K_{2n}^{et}({\cal X}) 
\end{equation}
\begin{equation}\label{dfr2}
{\overline{h}} : K_{2n}({\cal X}_v)\otimes {\mathbb Z}_l\rightarrow  K_{2n}^{et}({\cal X}_{v})
\end{equation}}
The maps (\ref{dfr1}) and (\ref{dfr2}) are isomorphism if we assume that the Quillen-Lichtenbaum conjecture holds true for $\cal X.$
Thus we obtain the following commutative diagram:
\begin{equation}\label{diagram 2.5a}
\xymatrix{
K_{2n}({\cal X}) \ar[d]_{h^{'} }\ar[r]^{r_v} & K_{2n}({\cal X}_{v}) \ar[d]^{\overline{h^{'}} } \\
 K_{2n}^{et}({\cal X})\ar[r]^{r_v} &  \,  K_{2n}^{et}({\cal X}_v)
}
\end{equation}
where the map $h^{'}$ (resp. ${\overline{h^{'}} }$ ) is the composition of the natural homomorphism 
$K_{2n}({\cal X})\rightarrow K_{2n}({\cal X})\otimes {\mathbb Z}_l$ (resp. $K_{2n}({\cal X}_v)\rightarrow K_{2n}({\cal X}_v)\otimes {\mathbb Z}_l$) with $h$ (resp. $\overline{h}$).

Concatenation of diagrams (\ref{diagram 2.4}) and (\ref{diagram 2.5a})  yields the following commutative diagram:
\begin{equation}\label{diagram 2.4a}
\xymatrix{
K_{2n}({\cal X}) \ar[d] \ar[r]^{r_v} & K_{2n}({\cal X}_{v}) \ar[d]\\
H^{1}(G_{S_{l}}; {T}_{l}(J)(n)) \ar[r]^{r_v} &  H^{1}({g}_{v}; {T}_{l}(J_{v})(n ))\, 
}
\end{equation}
with the vertical maps having finite kernels.

\section{Intermediate Jacobian}
In this section to ease notation we denote ${\tilde T}_{l,i}:= T_l(A_i)(n).$
Let $L$ be a finite extension of $F.$
For $w\notin S_{l} ,$ let $G_{w} := 
G({\overline L_{w}}/L_w)$ be the absolute Galois group of the completion $L_{w}$ of $L$ at $w.$ Let $k_{w}$ be the residue field at $w.$ Put $H^{1}_{f}(G_{w};{\tilde T}_{l,\, i})=i_{w}^{-1}H^{1}_{f}(G_w ;{\tilde V}_{l,\, i}).$ Here $i_{w}: H^{1}(G_{w};{\tilde T}_{l,\, i})\rightarrow H^{1}(G_{w};{\tilde V}_{l,\, i})$ and $H^{1}_{f}(G_{w};{\tilde V}_{l,\, i})=ker(\, res: H^{1}(G_{w}; {\tilde V}_{l,\, i})\rightarrow 
H^{1}(I_{w}; {\tilde V}_{l,\, i}) )$ where $I_{w} \subset G_{w}$ is the inertia subgroup and $res$
is the restriction map. Let
\setcounter{equation}{0}
\begin{equation}\label{equation 3.2}
H^{1}_{f,S_{l}}(G_{L}; {\tilde T}_{l,\, i}) = ker(\, H^{1}(G_{L}; {\tilde T}_{l,\, i})\rightarrow \prod_{w\notin S_{l}} H^1(G_{w}; {\tilde T}_{l,\, i})/H^1_{f}(G_{w}; {\tilde T}_{l,\, i}) \, )\, .
\end{equation} 
We define the intermediate Jacobian (see \cite{bgk05}, \cite{bgk03}):
\begin{equation}\label{equation 3.3}
J_{f,S_{l}} ({\tilde T}_{l,\, i}) = \lim_ {{\longrightarrow}\atop {L/F}}H^{1}_{f,S_{l}}(G_{L}; {\tilde T}_{l,\, i})\, .
\end{equation} 
In \cite{bgk05}, \cite{bgk03}, in  the more general situation of any free ${\mathbb Z}_{l}$-module of finite rank  $T_{l},$ we made the following:
\begin{assumption}\label{assumption 3.2}
For any finite extension $L/F$ and any place $w\in {\mathcal O}_L$ such that $w\notin S_{l}\, ,$  we have $T_{l}^{Fr_{w}}=0.$
\end{assumption}
\begin{rem}\label{remm}
We have  $H^{1}_{f,S_{l}}(G_{L}; {\tilde T}_{l,\, i}) \cong H^{1}(G_{L,S_{l}}; {\tilde T}_{l,\, i})$  and the assumption \ref{assumption 3.2} is satisfied 
(cf. \cite{bgk05}, p.5 ).
\end{rem}
 By \cite[Prop. 2.14]{bgk03} we have the following isomorphisms:
\begin{equation}
J_{f,S_{l}}({\tilde T}_{l,\, i})_{l} \,\, \cong \,\, 
{\tilde V}_{l,\, i} / {\tilde T}_{l,\, i} \,\, \cong \,\, 
V_{l}(A_i) / T_{l}(A_i) \, (n) \,\, \cong \,\, A_i [l^{\infty}] (n) \, .
\label{torsion of interm Jacobian}
\end{equation} 

Let ${\tilde T}_{l} := \bigoplus_{i=1}^t \, {\tilde T}_{l,\, i}.$ Since each ${\tilde T}_{l,\, i}$  is a free ${\mathbb Z}_l (n)$-module we have the following  natural isomorphism:
\begin{equation}
J_{f,S_{l}} ({\tilde T}_{l}) \cong \bigoplus_{i=1}^t \, J_{f,S_{l}} ({\tilde T}_{l,\, i})\, .
\label{intermediate jacobian decomposition}\end{equation}   

For a finite extension $L/F$ the  following map corresponding to these used in the direct systems
 (\ref{equation 3.3}): 
\begin{equation}\label{i*}
i^{*}: H^{1}(G_{F, S_{l}}, {T}_{l}(J)(n)) \rightarrow H^{1}(G_{L, S_{l}}, {T}_{l}(J)(n))
\end{equation}
is an injection. 
One readily verifies, using transfer in the Galois cohomology, that the map (\ref{i*}) has $l$-torsion kernel.. 
But the $l$-torsion part of (\ref{i*}) is just 
\begin{equation}\label{lpart}
H^{0}(G_{F, S_{l}}, {V}_{l}(J)/{T}_{l}(J) \, (n)) \rightarrow H^{0}(G_{L, S_{l}}, {V}_{l}(J) / {T}_{l}(J) \, (n)),
\end{equation}
which is clearly injective.
Similarly, the reduction of the map (\ref{i*}) 

 \begin{equation}\label{inject}
H^{1}({g}_{v}, {{T}_{l}(J_{v})}(n )) \rightarrow H^{1}({g}_{w}, {T}_{l}(J_{v})(n ))
 \end{equation}
is  injective for any finite extension $L/F$ and any prime $w$ of $\mathcal{O}_L$ over $v,\, v\notin S_l.$
Notice that by Definition \ref{definition of an isogeny splitting field} we have
\begin{equation}
H^{1}(G_{F, S_{l}}; {T}_{l}(J)(n)) \cong 
H^{1}(G_{F, S_{l}}; {T}_{l}(A) (n)) = \prod_{i=1}^{t} 
H^{1}(G_{F, S_{l}}; {T}_{l}(A_i)(n))^{e_i}.
\label{The Zl module}\end{equation}
Let us also recall   the following:
\begin{lem}\label{inj}( \cite{bgk03}, Lemma 2.13)
For any finite extension $L/F$ and  any prime
$w \notin S_l$ in ${\cal O}_{L}$ the natural map
\begin{equation}
r_{w}\,\,:\,\,H^1_{f, S_l}(G_L; T_l)_{tor}\rightarrow H^1(g_w;\, T_l)
\end{equation}
is an imbedding.
\end{lem}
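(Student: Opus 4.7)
The plan is to represent every torsion class in $H^1(G_L;T_l)$ via the Bockstein connecting homomorphism attached to the short exact sequence $0 \to T_l \to V_l \to V_l/T_l \to 0$, and then to use Assumption \ref{assumption 3.2} to force such a class to vanish as soon as its restriction to $g_w$ vanishes.

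The torsion subgroup of $H^1(G_L;T_l)$ coincides with the kernel of the natural map $H^1(G_L;T_l)\to H^1(G_L;V_l)$, since the target is a $\Q_l$-vector space and hence torsion-free. The long exact cohomology sequence attached to the above short exact sequence identifies this kernel with the image of the connecting map $\delta\colon (V_l/T_l)^{G_L}\to H^1(G_L;T_l)$. So every $\xi\in H^1_{f,S_l}(G_L;T_l)_\tor$ can be written as $\xi=\delta(a)$ for some $a\in (V_l/T_l)^{G_L}$; lifting $a$ to $\tilde a\in V_l$, the class $\xi$ is represented by the $1$-cocycle $c(\sigma)=\sigma\tilde a-\tilde a$, which takes values in $T_l$ precisely because $a$ is $G_L$-fixed modulo $T_l$.

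For $w\notin S_l$, the inertia $I_w$ acts trivially on $T_l$ (this is implicit in the very formulation of Assumption \ref{assumption 3.2}, since $T_l^{Fr_w}$ would otherwise not be well-defined), so $g_w=G_w/I_w$ acts on $T_l$ through its topological generator $Fr_w$, and $H^1(g_w;T_l)\cong T_l/(Fr_w-1)T_l$. Under this identification $r_w(\xi)$ is represented by the single value $c(Fr_w)=(Fr_w-1)\tilde a$. The hypothesis $r_w(\xi)=0$ therefore gives $(Fr_w-1)\tilde a=(Fr_w-1)b$ for some $b\in T_l$, i.e.\ $\tilde a-b\in V_l^{Fr_w}$.

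Finally, Assumption \ref{assumption 3.2} says $T_l^{Fr_w}=0$; since $V_l^{Fr_w}$ is a $\Q_l$-subspace of $V_l$ and any nonzero $\Q_l$-subspace of $V_l$ meets the lattice $T_l$ in a nonzero submodule (clear the denominators), this upgrades to $V_l^{Fr_w}=0$. Hence $\tilde a=b\in T_l$, so $a=0$ in $V_l/T_l$, and $\xi=\delta(a)=0$. The only genuinely substantive point is the passage from $T_l^{Fr_w}=0$ to $V_l^{Fr_w}=0$; this is the one place where the full lattice structure of $T_l$ inside $V_l$ is used, everything else being formal manipulation of the Bockstein and the unramified cohomology of $\hat\Z$.
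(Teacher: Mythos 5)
Your argument is correct, and it is essentially the standard proof of this statement; the present paper does not reprove the lemma but simply cites \cite{bgk03}, Lemma~2.13, where an argument of the same flavour (identify torsion with the image of the connecting map from $(V_l/T_l)^{G_L}$, then use $T_l^{Fr_w}=0$) is what underlies the result. The two points worth flagging are both handled correctly: (i) you use that for $w\notin S_l$ the inertia group $I_w$ acts trivially on the full $V_l$ (good reduction plus $w\nmid l$), which makes the cocycle $\sigma\mapsto(\sigma-1)\tilde a$ literally vanish on $I_w$ rather than merely be cohomologically trivial, so the restriction to $G_w$ is genuinely inflated from $g_w$ without needing to invoke the $H^1_f$ condition separately; and (ii) the upgrade from $T_l^{Fr_w}=0$ to $V_l^{Fr_w}=0$ by clearing denominators is exactly the step where Assumption~\ref{assumption 3.2} enters, and it is stated correctly. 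No gaps.
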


\section{Kummer theory}
Kummer theory in the context of abelian varieties was developed in \cite{r79}. 
In this section we collect necessary facts which will be useful in section 7.

\begin{dfn}\label{kummer}
 Let $A$ be an abelian variety over a number field $F$. Let ${\cal R}=\End_FA$. For $\alpha\in {\cal R}$ we set $F_{\alpha}=F(A[\alpha]) $ and $ G_{\alpha} = Gal(F_{\alpha}/F)$. The Kummer map:
\begin{equation}\label{psi}
{\psi}^{\alpha} : A(F )/{\alpha}A(F)\rightarrow {\Hom}_{G_{\alpha}} (􏰂G_{F_{\alpha}}, A[\alpha])
\end{equation}
 is defined as the composition:
 \begin{equation}
A(F)/{\alpha}A(F)\hookrightarrow  􏰇H^1( F,A[\alpha]) \xrightarrow{\res} H^1( F_{\alpha},A[\alpha])^{G_{\alpha}}
\end{equation}
with the first map a coboundary map for the $G_F$-cohomology of the Kummer sequence:
\begin{equation}\label{ks}
0\rightarrow A [ \alpha ] \rightarrow A ({\overline F} ) \xrightarrow{\alpha} A ({\overline F}   ) \rightarrow 0
\end{equation}
and the second map the restriction to $F_{\alpha}$.
\end{dfn}
Explicitly, the map (\ref{psi}) is given by the following formula
\begin{equation}\label{explicit}
{\psi}^{\alpha}({x} )(\sigma)={\sigma}(\frac{x}{\alpha})-\frac{x}{\alpha},
\end{equation}
where $\frac{x}{\alpha}$ is a fixed "$\alpha$-root" of $x$ i.e. an element $y\in A({\overline F})$ such that ${\alpha}y=x.$

We are interested in Kummer maps where $l$ is a rational prime and  ${\alpha}=l^k\in {\mathbb Z}\hookrightarrow  {\cal R}.$
Thus we have the  the family of Kummer maps:
\begin{equation}\label{ladic}
{\psi}^{l^k} :  A(F )/{l^k}A(F)\rightarrow {\Hom}_{G_{l^k}} (􏰂G_{F_{l^k}}, A[l^k]).
\end{equation}
The maps (\ref{ladic}) are compatible with the natural maps induced by multiplication by $l$. Therefore taking the inverse limit of both sides and twisting it with ${\mathbb Z}_l(n)$ yields the  following map:
\begin{equation}\label{ladic1}
A(F)\otimes {\mathbb Z}_l (n) \rightarrow {\Hom}_{G_{l^{\infty}}}(G_{F_{l^{\infty}}}, T_{l}(A))\otimes {\mathbb Z}_l(n),
\end{equation}
where $F_{l^{\infty}}={\bigcup_k}F_{l^k}$ and $G_{F_{l^{\infty}}}=Gal({\overline F}/F_{l^{\infty}})$.
From the map (\ref{ladic1}) by restriction of the Kummer maps to $G_{F_{l^{\infty}},S_l}$ 
we obtain the map: 
\begin{equation}
A(F)\otimes {\mathbb Z}_l (n) \rightarrow {\Hom}(G_{F_{l^{\infty}},S_l}, T_{l}(A))\otimes {\mathbb Z}_l(n)
\end{equation}

Notice that ${\Hom}(G_{F_{l^{\infty}},S_l}, T_{l}(A))\cong {H^1}(G_{F_{l^{\infty}},S_l}, T_{l}(A))$ and $H^0(G_{F_{l^{\infty}},S_l}, {\mathbb Z}_{l}(n))={\mathbb Z}_{l}(n).$
Composition with the cup product yields the map:
 \begin{equation}
  A(F)\otimes {\mathbb Z}_{l}(n) \rightarrow H^1(G_{F_{l^{\infty}},S_l}, T_l(A)(n)).
  \end{equation}
  Obviously, this map factors through $A(F_{l^{\infty}})\otimes {\mathbb Z}_{l}(n)$
 We will also need the coboundary map in the $G_{F,S_l}$-cohomology of the Kummer sequence (\ref{ks}) with $\alpha$ equal to multiplication by $l^k$:
 \begin{equation}\label{inj0}
 A(F)/l^k \hookrightarrow H^1(G_{F,S_l}, A[l^k]).
 \end{equation}
 Passing to the inverse limits in (\ref{inj0}) we obtain an injection:
 \begin{equation}\label{inj1}
 A(F)\otimes {\mathbb Z}_l \hookrightarrow H^1(G_{F,S_l}, T_l(A)).
 \end{equation} 
 \begin{rem}
  Notice that we could pass to $G_{F,S_l}$-cohomology because of the choice of $S_l$ which guarantees 
 that multiplication by $l$ on $A(F)$ is {\'e}tale (cf. Remark \ref{remm} and discussion on  p. 148  of \cite{bgk03} ).
  \end{rem}
 We have the following commutative diagram:
\begin{equation}\label{diagramk1}
\xymatrix{
A(F)\otimes {\mathbb Z}_l(n) \ar[d] \ar[r] & H^1( G_{F,S_l}, T_l(A) )\otimes Z_l(n)\ar[d]\\
A(F_{l^{\infty}})\otimes {\mathbb Z}_l(n)  \ar[r] &  \, H^1( G_{F_{l^{\infty}},S_l}, T_l(A)(n) ).
}
\end{equation} 
In the diagram (\ref{diagramk1}) horizontal arrows are injections by (\ref{inj1}) and the left vertical arrow is an obvious injection.

We also have the following:
\begin{lem}\label{keylemma}
The restriction homomorphism composed with the natural injection \linebreak $H^1( G_{F,S_l}, T_l(A)(n) ) \rightarrow H^1( G_{F_{l^{\infty}},S_l}, T_l(A)(n) )$ is a map with finite kernel.

\end{lem}
\begin{proof}
 By definitions of $S_l$ and the field $F_{l^{\infty},S_l}$ we see that 
$G_{F_{l^{\infty},S_l}}$ acts trivially on $T_l(A)(n)$ and therefore   $H^1( G_{F_{l^{\infty}},S_l}, T_l(A)(n) ) 
= Hom(G_{F_{l^{\infty}},S_l}, T_l(A)(n) ).$ We have the Leray spectral sequence in Galois cohomology (cf \cite{mi80} )
with  $E_2^{p,q}= H^p( G(F_{l^{\infty}}/F); H^q( G_{F_{l^{\infty}},S_l}, T_l(A)(n))) $ converging to $H^{p+q}(G_{F,S_l}, T_l(A)(n)).$
The inflation - restriction sequence has the following form:
\begin{equation}\label{exctseq}
 0 \rightarrow   H^1( G(F_{l^{\infty}}/F), T_l(A)(n) )  \stackrel{inf}\longrightarrow   H^1( G_{F,S_l}, T_l(A)(n) )  \stackrel{res}\longrightarrow 
 \end{equation}
 $$ H^0( G(F_{l^{\infty}}/F), {\Hom}(G_{F_{l^{\infty},S_l}}, T_l(A)(n) )).
$$
But 
$$H^0( G(F_{l^{\infty}}/F), {\Hom}(G_{F_{l^{\infty},S_l}}, T_l(A)(n) ))={\Hom}(G_{F_{l^{\infty},S_l}}, T_l(A)(n) ))^{G(F_{l^{\infty}}/F)}$$
$$\subset {\Hom}(G_{F_{l^{\infty},S_l}}, T_l(A)(n) )
= H^1( G_{F_{l^{\infty}},S_l}, T_l(A)(n) ). $$
Notice that the groups  $H^1( G(F_{l^{\infty}}/F), T_l(A)(n) )$  
    are finite ( cf.  \cite{s71}, Corollaire on p.734  ). The assertion of the lemma follows.
\end{proof}
\begin{rem}
The Corollaire  to Th{\'e}or{\'e}me 2 in \cite{s71} asserts that for any $k\ge 0$ the groups $H^k( G(F_{l^{\infty}}/F), T_l(A) )$ are finite  $l$-groups. However, the same proof shows that for any $k$ the groups $H^k( G(F_{l^{\infty}}/F), T_l(A)(n) )$ are finite $l$-groups. 
\end{rem}
\begin{cor}\label{cor2}
In particular the restriction map composed with the natural injection \linebreak $H^1( G_{F,S_l}, T_l(A) ) \rightarrow H^1( G_{F_{l^{\infty}},S_l}, T_l(A) )$ has finite kernel. Therefore if $Q_j\otimes1 \in A(F)\otimes {\mathbb Z}_l(n), j=1,\dots r$ are linearly independent elements over ${\mathbb Z}_l$  then their images  by the left verical arrow in the diagram (\ref{dgrm1}) are also linearly independent.
\end{cor}

\section{Proof of Theorem \ref{best}}
Notice that for any abelian variety we have the following commutative diagram ( cf. \cite{bgk03} diagram (6.10) ):
\begin{equation}\label{dgrm}
\xymatrix{ A(F) \ar[r]\ar[d]^{{\psi}_{F,l} }& {\prod}_{v\notin{S_l }}A_v (k_v)_l \ar[d] \\
H^1(G_{F,S_l}, T_l(A)) \ar[r] & {\prod}_v H^1(g_v,T_l(A)).
}
\end{equation}
From the diagram (\ref{dgrm}) tensoring with ${\mathbb Z}_l(n)$ we obtain the following commutative diagram:
\begin{equation}\label{dgrm1}
\xymatrix{ A(F)\otimes {\mathbb Z}_l(n) \ar[r]\ar[d]^{{\psi}_{F,l} }& {\prod}_{v\notin{S_l }}A_v (k_v)_l \otimes {\mathbb Z}_l(n)\ar[d] &\\
H^1(G_{F,S_l}, T_l(A))\otimes{\mathbb Z}_l(n) \ar[d]\ar[r] & {\prod}_v H^1(g_v,T_l(A))\otimes{\mathbb Z}_l(n) \ar[d] &\\ H^1( G_{F_{l^{\infty}},S_l}, T_l(A)(n) ) \ar[r]& {\prod}_w H^1(g_w, T_l(A_v)(n)).
}
\end{equation}
where $w|v$ are primes of ${\cal O}_{F_{l^{\infty}},S_l}.$ By Corollary \ref{cor2}  the image in $H^1(G_{F_{l^{\infty}},S_l}, T_l(A)(n))$ of any non-torsion element $Q\otimes 1 \in A(F)\otimes {\mathbb Z}_{l}(n)$ is a non-torsion element. 
 Therefore by the commutativity of the  diagram (\ref{dgrm1}) the reduction of this image in $H^1(g_w, T_l(A_v)(n))$
 comes 
from the reduction $red_v(Q\otimes 1)$. 
The proof of Theorem \ref{best} (1) is essentially the same as that of Theorem \ref{best}  (2). For (1) we use the diagram (\ref{dgrm}) whereas for the point (2) the diagram (\ref{dgrm1}). Therefore we will give a proof of Theorem \ref{best} (2). 
In what follows we  consider elements of $H^1(G_{F_{l^{\infty}},S_l}, T_l(A)(n))$ coming from $A(F)\otimes {\mathbb Z}_l(n)$.

Our proof is a generalization  of the counterexample to local - global principle for abelian varieties constructed by P. Jossen and A. Perucca in \cite{jp10} and later extended to the context of $t$-modules in \cite{bok18}.
Because of (\ref{The Zl module}) and the choice of $F$ we may assume that $J=A^{e}$  where $A$ is a geometrically simple abelian variety with ${\mathrm{End}}A={\mathbb Z}$ such that  ${d=\dim\tilde V}_{l}, e=d+1$ and ${\mathrm{rank}}A(F)\ge e.$

   Let $v\in {\cal O_{F,S_{l}}}$ be a prime of good reduction for $A.$  

 Pick non-torsion points $Q_1,\dots Q_e$ linearly independent over ${\mathbb Z}.$ Then $Q_1\otimes 1,\dots ,Q_e\otimes 1$ in $A(F)\otimes {\mathbb Z}_l(n)$  and their images $P_1,\dots P_e\in  H^1(G_{F_{l^{\infty}},S_l}, T_l(A)(n))$  are  
 linearly independent over ${\mathbb Z}_l.$   Let $\overline{P}_{i}\in H^{1}({g}_{w}; {T}_{l}(A_{v})(n ))$ be the reduction of $P_i$  ${\mod}\,\, {w.}$ 
  Consider the reductions  ${{\bar Q}_1},\dots, {{\bar Q}_e}\in A_v(k_v)_l.$


Let
\begin{align}\label{cex}
    P &:= \begin{bmatrix}
           P_{1} \\
           P_{2} \\
           \vdots \\
           P_{e}
         \end{bmatrix},
         \quad
         {\Lambda}:= \{ MP \,: \quad M\in {\mathrm{Mat}}_{e\times e}({\mathbb Z}_{l}), \quad {\tr}M=0 \}.
  \end{align}
 and   $\overline{P}=[\overline{P}_{1}\,,\dots ,\overline{P}_{e}]^{T},$  
  $\overline{P}_{i}\in H^{1}({g}_{w}; {T}_{l}(A_{v})(n ))$ be the reduction  ${\mod}\,\, {w}$  of $P.$ 
  Notice that $P\notin {\Lambda}+  H^1(G_{F_{l^{\infty}},S_l}, T_l(A)(n))_{tor}$ since $P_{1},\dots ,P_{e}$ are ${\mathbb Z}_{l}$-linearly independent.
We will find a matrix $M\in {\mathrm{Mat}}_{e\times e}({\mathbb Z}_{l})$ of trace zero such that $\overline{P}=M{\overline{P}}.$  This will show that ${\overline P}\in {\overline{\Lambda}}.$
We will give two constructions of such a matrix.

{\bf First construction}

Since the group $Y= ( {\overline P}_{1},\dots, {\overline P}_e )\subset H^{1}({g}_{w}; {T}_{l}(A_{v})(n ))$  is finite there exist ${\alpha}_{1},\dots , {\alpha}_{e}\in {\mathbb Z}$ with ${\alpha}_i$ minimal positive such that
\begin{align*} 
{\alpha}_{1}{\overline P}_{1} +   m_{1,2}{\overline P}_2 + \dots  +  m_{1,e}{\overline P}_{e}  &=  0 \\
{m_{2,1}}{\overline P}_{1} +   {\alpha}_{2}{\overline P}_2 + \dots   +  m_{2,e}{\overline P}_{e}  &=  0 \\
\dots\dots\dots\dots\dots\dots\dots\dots\dots   &    \\
{m}_{e,1}{\overline P}_{1} +   m_{e,2}{\overline P}_2 + \dots  +  {\alpha}_{e}{\overline P}_{e}  &=  0 .
\end{align*}
We will show that  $D={\gcd ({{\alpha}_1, \dots {\alpha}_e}})=1.$
Assume opposite. Choose a rational prime $p$ that divides $D.$ This means, by our choice of ${\alpha}_{1},\dots , {\alpha}_e,$  that $p$  divides  coefficients of any yielding zero linear combination of points ${\overline P}_{1},\dots, {\overline P}_e \in H^{1}({g}_{w}; {T}_{l}(A_{v})(n )).$  In particular, we see that $p$ divides the orders of ${\overline P}_i , i=1,\dots e.$ 
Since $H^{1}({g}_{w}; {T}_{l}(A_{v})(n ))$ is $l$-torsion  the  only possibility is $p=l.$


Let ${\alpha}_1=x_1l$ and
\begin{equation}\label{cexd1}
{\alpha}_{1}{\overline P}_{1}+ {x}_{2}l{\overline P}_{2}+\dots +{x}_{e}l{\overline{P_{e}}}=0
\end{equation}
be a linear relation.
 
 According to \cite{st68} (cf. also \cite{l57} ),  if $l$ is prime to the characteristic of   the field of definition $k$ of $A$  then over the separable closure $k^s$ of $k$ we have $A(k^s)[l]=({\mathbb Z}/l{\mathbb Z})^{2g}$ where $A(k^s)[l]$ denotes
 subgroup of $A(k^s)$  of points of order dividing $l$. Therefore, $A(k)[l]$ is isomorphic to one of the following groups $\{ 0, ({\mathbb Z}/l{\mathbb Z})^i,\,\,\, i=1,\dots ,2g\}$ and thus is again a ${\mathbb Z}/l{\mathbb Z}$-vector space. 
 Since $v\in {\cal O}_{F,S_l}$ we get $A(k_v)[l]\cong ({\mathbb Z}/l{\mathbb Z})^d.$
Consider  $x_1l{\bar Q}_1+\dots +{x_e}l{\bar Q}_e=0$ in $A_v(k_v)_l$

Then $$x_1{\bar Q}_1 +\dots +x_e{\bar Q}_e={\tilde T}$$ where ${\tilde T}\in \bar\Lambda\cap A(k_v)[l].$
 Now, since ${\dim}_{{\mathbb Z}/l{\mathbb Z}}A(k_v)[l]=d$ we see that ${\tilde T}$ is a linear combination of say ${\bar Q}_2,\dots, {\bar Q}_e$ and this gives the desired contradiction (cf. diagram \ref{dgrm1}).

 Hence, there exist $a_{1},\dots ,a_{e}\in {\mathbb Z}$ such that 
\begin{equation}\label{cexd2}
e=a_{1}{\alpha}_{1}+\dots  +a_{e}{\alpha}_{e}.
\end{equation}
 Put $m_{i,i}=1-a_{i}{\alpha}_{i }.$  Then $m_{1,1}+\dots +m_{e,e} =0$ and 
\begin{equation}\label{fa1}
\begin{bmatrix}
m_{1,1} & -a_1m_{1,2}& \dots & -a_1m_{1,e}\\
\dots &  \dots & \dots \\
-a_em_{e,1} & -a_em_{e,1} & \dots & m_{e,e}
\end{bmatrix}
\begin{bmatrix}
{\,{\overline P}_{1}}\\
 \dots \\
{\,{\overline P}_{e}}
\end{bmatrix}
=
\begin{bmatrix}
{\,{\overline P}_{1}}\\
 \dots \\
{\,{\overline P}_{e}}
\end{bmatrix} .
\end{equation}

Therefore ${\overline P}\in {\overline{\Lambda}}.$  
We view the matrix M in (\ref{fa1}) as the matrix with  the ${\mathbb Z}_l$ coefficients.   

{\bf Second construction}

We start with the following general group-theoretic lemma:
\begin{lem}\label{grouplemma}
Assume $G=(U_1,\dots U_{\mu})$ is an abelian $l$-torsion group generated by ${\mu}$ elements. Then any nontrivial subgroup $H=(V_1,\dots V_{\nu})$  of $G$ can be generated by at most ${\mu}$ elements. Moreover if ${\nu}>{\mu}$ then one can express
${\nu}-{\mu}$  generators say $V_{i_1}, \dots V_{i_{{\nu}-{\mu}}}$ as ${\mathbb Z}$-linear combination of the remaining generators.

\end{lem}
\begin{proof}
Assume  that  a subgroup $H$ of the group $G$ is generated by ${\nu}>{\mu}$ elements. 
Consider $G$ as a ${\mathbb Z}$-module. Then $G\cong {\mathbb Z}/{l^{n_1}}{\mathbb Z}\oplus \dots \oplus {\mathbb Z}/{l^{n_{\mu}}}{\mathbb Z}$ where $ n_1\leq n_2\leq\dots \leq n_{\mu}.$ Pick elements $S_i=(0,\dots, 0, 1, 0,\dots ,0)$  (1 on the $i$-th place)  and expand $V_i=\sum {\beta}_{j,i}^{(1)}S_j, i=1,\dots, {\nu},  j=1,\dots {\mu}$ in the ${\mathbb Z}$-basis $\{S_i\} $ of $G.$ In the columns of the matrix $({\beta}^{(1)}_{j,i})\in M_{{\mu}\times {\nu}}$ we have the coefficients of the expansion of $V_i.$ In the first row we pick a nonzero coefficient with the smallest $l$-valuation. Without loss of generality assume this element is in the first column. Subtract from each column appropriate multiple of the first column to obtain   zeros for $i>1$ in the first row. 
This is possible since  every element in the first row is of the form ${\beta}_{1,i}^{(1)}=u_i\cdot l^{k_i},$ where $u_i\in {\mathbb Z}/{l^{n_1}}{\mathbb Z}$ is a unit and $k_i <n_1.$ We obtain a new ${\mu}\times {\nu}$ matrix ${\beta}_{j,i}^{(2)}$ with one nonzero entry in the first row, namely ${\beta}^{(2)}_{1,1}={\beta}^{(1)}_{1,1}$. Now,  for $i\geq 2$ the $i$-th column contains expansion of the point $V_i- c_{i,1}^{(1)}V_1$ where $c_{i,1}^{(1)}$ is the coefficient ( viewed in ${\mathbb Z}$) by which we multiply the first column to obtain $0\in {\mathbb Z}/{l^{n_1}}{\mathbb Z}$ in the place $(1,i).$ Continue this process with the second row i.e.  pick an element ${\beta}_{j,i}^{(2)}, \, i\geq 2$ with the smallest $l$-valuation, transpose columns and enumerate points correspondingly so that the element with the smallest $l$-valuation is at the place $(2,2).$ Obtain zeros for columns with the index $i>2$. Continue the process until we obtain the lower diagonal matrix. Notice that at the $k$-th stage  for $i\geq k$ the  $i$-th column contains the expansion of the element 
 $V^{(k)}_{i} -{\sum}_{s<i}c_{i,s}^ {(k)}V^{(k)}_{s}$ in the basis $\{S_j, \, j=1,\dots ,{\mu}\}.$  
 Assume that at the $m$-th stage we obtain zeros in the columns $i> r$ where $r\leq {\mu}.$
 Let $C=\{V^{(m)}_{r+1}, \dots , V^{(m)}_{{\nu}}\}$  be the elements corresponding to the zero columns.
 Thus any element from $C$ is a linear combination of the generators not belonging to $C.$
 Since our possible transposition at the $k$-th stage  involved transposition of the $k$-th and $i$-th columns with $i>k$ we see that the coefficient at  each $V^{(m)}_{r+1},\dots ,V^{(m)}_{{\nu}} $ is one. 
 Thus for $i\in C$ we have  $V^{(m)}_i={\sum}_{s\notin  C} c_{i,s}^{(m)}V^{(m)}_{s}$ 
 \end{proof}
Now  we can apply Lemma \ref{grouplemma} to  $G=\Im(A_v(k_v)_l\otimes {\mathbb Z}_l(n))\subset H^{1}({g}_{w}; {T}_{l}(A_{v})(n ))$ (cf. diagram (\ref{dgrm1})) and $H=(
{\bar P}_1,\dots , {\bar P}_e).$ Since $G$ is a finite $l$-group generated by $d$ elements and $e=d+1$ we may assume without loss of generality that ${\bar P}_1={\sum}_{i=2}^e c_i{\bar P}_i.$ Then we can choose
\[M=\left[ \begin{array}{ccccc}
	1-e & e c_{2} & e c_{3} & \ldots\ldots & e c_{e}\\
	0 &1 & 0 & \ldots\ldots  & 0\\
		0 &0 & 1 & \ldots\ldots  & 0\\
		\vdots &\vdots & \vdots &  \ddots & \vdots\\
			0 &0 & 0 & \ldots\ldots  & 1\\
\end{array}\right]. \] 

Obviously ${\overline P}=M{\overline P}$ and therefore ${\overline P}\in {\overline{\Lambda}}$ but $P\notin {\Lambda}+  H^1(G_{F_{l^{\infty}},S_l}, T_l(A)(n))_{tor}.$

\section{Local to global principle and dynamical systems}
S. Bara{\'n}czuk  in  \cite{b17} considers abelian groups satisfying the following two axioms. 

\begin{ass}\label{ass1}
Let $B$ be an abelian group such that there are homomorphisms $r_{v}: B\rightarrow B_{v}$ for an infinite family $v$, whose targets $B_v$ are finite abelian groups.
\begin{enumerate}
\item[(1)] Let $l$ be a prime number, $(k_{1},\dots , k_m)$  a sequence of nonnegative integers. If $P_1,\dots , P_m \in B$ are points linearly independent over ${\mathbb Z},$ then there is a family of primes $v$ in $F$ such that 
$l^{k_{i}}|| {\ord}_{v}P_{i}$ if $k_{i}>0$ and $l\nmid {\ord}_{v}P_i$ if $k_i=0.$
\item[(2)] For almost all $v$ the map $B_{\text{\rm tors}}\rightarrow B_{v}$ is injective.
\end{enumerate}
\end{ass}
Here ${\ord}_{v}P$ is the order of a reduced point $P\mod v$.
Under the  Assumptions \ref{ass1}, S. Bara{\'n}czuk was able to prove the following dynamical version  of the local lo global principle
\begin{thm}\label{b17}( \cite{b17})
Let $\Lambda$ be a subgroup of $B$ and  $P\in B$ be a point of infinite order and $\phi$ be a natural number. Then the following are equivalent:
\begin{enumerate}
\item[i)]
For almost every $v$
$$O_{\phi}(P \mod v) \cap ({\Lambda} \mod v)\neq\emptyset ,$$
\item[ii)]
$O_{\phi}(P) \cap ({\Lambda})\neq\emptyset$.
\end{enumerate}
\end{thm}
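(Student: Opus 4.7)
The direction (ii)$\Rightarrow$(i) is immediate: if $\phi^{n_0}P \in \Lambda$ for some $n_0 \geq 0$, then $\phi^{n_0}r_v(P) = r_v(\phi^{n_0}P) \in r_v(\Lambda)$ for every $v$, so the orbit modulo $v$ always meets $r_v(\Lambda)$.

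For (i)$\Rightarrow$(ii) I would proceed by contraposition. Assume $\phi^n P \notin \Lambda$ for every $n \geq 0$; the goal is to produce infinitely many $v$ with $O_\phi(r_v(P)) \cap r_v(\Lambda) = \emptyset$. The key simplification is that $B_v$ is finite, so the orbit $O_\phi(r_v(P))$ is finite and eventually periodic, and $r_v(\Lambda) \subseteq B_v$ is also finite; the required non-intersection at each $v$ thus amounts to a finite conjunction of inequalities $\phi^n r_v(P) \neq r_v(Q)$. I would split into two cases according to the position of $P$ relative to $\Lambda + B_{\tor}$.

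If $P = Q_0 + T$ with $Q_0 \in \Lambda$ and $T \in B_{\tor}$, the hypothesis reduces to $\phi^n T \notin \Lambda \cap B_{\tor}$ for every $n$. The orbit $\{\phi^n T : n \geq 0\}$ lies in the finite cyclic subgroup $\langle T \rangle$, so only finitely many nonzero torsion differences $\phi^n T - Q$ (for $Q \in \Lambda \cap B_{\tor}$) need to remain nonzero modulo $v$. Assumption \ref{ass1}(2) supplies a cofinite family of $v$ on which $r_v$ is injective on $B_{\tor}$; on that family none of these torsion differences vanish, so $\phi^n r_v(P) \notin r_v(\Lambda)$ for every $n$.

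If instead $P \notin \Lambda + B_{\tor}$, I would choose a prime $l$ coprime to $\phi$ and, using finite generation of $\Lambda$ (available in the intended applications), extract generators $Q_1, \dots, Q_s$ of $\Lambda$ so that $P, Q_1, \dots, Q_s$ are $\mathbb{Z}$-linearly independent modulo $B_{\tor}$. Assumption \ref{ass1}(1) applied to the tuple $(P, Q_1, \dots, Q_s)$ at $l$ with valuation pattern $(1, 0, \dots, 0)$ then produces infinitely many $v$ with $l \,||\, \ord_v(P)$ and $l \nmid \ord_v(Q_j)$ for every $j$. Since $\gcd(l,\phi) = 1$, the $l$-part of $\ord_v(\phi^n P)$ equals $l$ for every $n \geq 0$, so each $\phi^n r_v(P)$ carries a nontrivial $l$-primary component; meanwhile $r_v(\Lambda) = \langle r_v(Q_1), \dots, r_v(Q_s)\rangle$ has order coprime to $l$ and so contains no element whose order is divisible by $l$. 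Hence $\phi^n r_v(P) \notin r_v(\Lambda)$ for every $n$ on this infinite family, completing the contraposition. The main obstacle is this second case: the $\mathbb{Z}$-linear-independent extraction of $P, Q_1, \dots, Q_s$ from the hypothesis $P \notin \Lambda + B_{\tor}$, which is the same linear-algebra mechanism as the step establishing $\gcd(\alpha_1, \dots, \alpha_e) = 1$ in the proof of Theorem \ref{best} and traces back to the constructions of \cite{jp10, bok18}.
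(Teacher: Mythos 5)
Theorem \ref{b17} is quoted from \cite{b17}; the paper does not reprove it, so there is no internal proof to compare against. Assessing your argument on its own merits, the direction (ii) $\Rightarrow$ (i) is fine, but the contrapositive of (i) $\Rightarrow$ (ii) has a genuine gap in each branch of your case split.

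In the branch $P = Q_0 + T$ with $Q_0 \in \Lambda$, $T \in B_{\tor}$, you correctly reduce to showing $r_v(\phi^n T) \notin r_v(\Lambda)$ for all $n$ on an infinite family of $v$, and then invoke Assumption \ref{ass1}(2). But injectivity of $r_v$ on $B_{\tor}$ only excludes coincidences $r_v(\phi^n T) = r_v(Q)$ for \emph{torsion} $Q \in \Lambda$. Since $B_v$ is finite, every element of $r_v(\Lambda)$ is torsion in $B_v$ regardless of whether its preimage in $\Lambda$ is, so one must also rule out $r_v(\phi^n T) = r_v(Q')$ for $Q' \in \Lambda$ of infinite order — and there are a priori infinitely many such $Q'$. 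Assumption (2) alone does not do this; some use of Assumption (1) on infinite-order differences $\phi^n T - Q'$ is needed, and your write-up does not supply it.

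In the branch $P \notin \Lambda + B_{\tor}$, the proposed extraction of generators $Q_1,\dots,Q_s$ of $\Lambda$ with $P, Q_1,\dots,Q_s$ $\mathbb{Z}$-linearly independent modulo torsion is simply not available in general. It fails precisely when $P$ lies in the $\mathbb{Q}$-span of $\Lambda$ modulo torsion, i.e. $mP \in \Lambda + B_{\tor}$ for some $m \geq 2$, while still $P \notin \Lambda + B_{\tor}$ (e.g. $\Lambda = 2\mathbb{Z}e_1 \oplus 2\mathbb{Z}e_2$, $P = e_1$ in a free ambient $B$). This ``fractional dependence'' situation is exactly the delicate one in this circle of local-to-global results and is where the $\gcd$ bookkeeping of the proof of Theorem \ref{best} (and of \cite{bk13}, \cite{jp10}) earns its keep. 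You flag this as the main obstacle yourself; as written it is not resolved, so the dichotomy torsion/non-torsion is the wrong one — the argument must instead be organized around rational dependence versus rational independence of $P$ on $\Lambda$, with a dedicated treatment of the dependent case.
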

Here $O_{\phi}(P)=\{ {\phi}^n(P) : n\geq 0\}.$
We have the following lemma:
\begin{lem}\label{kg}
Let $A=A_{1}^{e_1}\times \dots \times A_{t}^{e_t},$ be an abelian variety such that ${\End}A_i={\mathbb Z} $ for $i=1,\dots ,t.$
 Then $B=H^{1}(G_{F, S_{l}}; {T}_{l}(A) (n))$ and $B_v=H^{1}({g}_{v}; {T}_{l}(A_{v})(n ))$ for $v\in {\cal O}_v$ fulfil Assumptions \ref{ass1}.
\end{lem}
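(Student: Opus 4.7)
The plan is to verify each of the two axioms separately by invoking results already available in the literature, as flagged in Remark \ref{rmk}: axiom (2) is essentially contained in Lemma \ref{inj}, and axiom (1) is a reformulation of the Reduction Theorem of \cite{bk11}. Neither step requires new ideas; the content of the lemma is just the observation that all hypotheses are in place.

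For axiom (2), I would start from the identification $H^{1}_{f,S_{l}}(G_{F}; T_{l}(A)(n)) \cong H^{1}(G_{F,S_{l}}; T_{l}(A)(n)) = B$ provided by Remark \ref{remm}, which also confirms that Assumption \ref{assumption 3.2} holds in our setting (since the $A_i$ are absolutely simple with $\End_{\overline{F}}A_i=\mathbb{Z}$, no nonzero power of any Frobenius has fixed vectors on $T_l(A_i)(n)$ for the twists under consideration). Then Lemma \ref{inj} directly yields an embedding $B_{\tor} \hookrightarrow B_v = H^{1}(g_v; T_l(A_v)(n))$ for every $v \notin S_l$, which is stronger than what axiom (2) asks for.

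For axiom (1), I would use the decomposition (\ref{The Zl module}) to reduce to a block-diagonal situation on $\prod_i H^{1}(G_{F,S_{l}}; T_l(A_i)(n))^{e_i}$, with the $A_i$ absolutely simple and $\End_{\overline{F}}A_i=\mathbb{Z}$. In this setup, the Reduction Theorem of \cite{bk11} asserts exactly that, given $\mathbb{Z}$-linearly independent classes $P_1,\dots,P_m \in B$ and a prescribed sequence $(k_1,\dots,k_m)$ of nonnegative integers, there is an infinite family of primes $v$ of $\mathcal{O}_{F,S_l}$ realizing $l^{k_i}\,\|\,\ord_v P_i$ when $k_i>0$ and $l \nmid \ord_v P_i$ when $k_i=0$. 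The argument in \cite{bk11} reduces modulo $l^{k+1}$ with $k=\max_i k_i$, interprets the reductions via the mod-$l^{k+1}$ Kummer sequence of Definition \ref{kummer}, and invokes Chebotarev density together with open-image theorems for the Galois representation on $T_l(A_i)$ to produce the required Frobenius conjugacy classes.

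The substantive obstacle is entirely packaged inside the Reduction Theorem of \cite{bk11}, whose proof rests on large-image results for the mod-$l^{k+1}$ Galois representation on $A[l^{k+1}]$; the hypothesis $\End_{\overline{F}} A_i=\mathbb{Z}$ for each $i$ is precisely what makes these large-image inputs available. Once this is invoked, the remaining bookkeeping is routine: the decomposition (\ref{intermediate jacobian decomposition}) shows that $\mathbb{Z}$-linear independence in the product corresponds to $\mathbb{Z}$-linear independence in each factor $H^{1}(G_{F,S_{l}}; T_l(A_i)(n))^{e_i}$, and the reduction maps $r_v$ respect this direct-sum decomposition, so applying axiom (1) factor by factor and reindexing the resulting Chebotarev families concludes the verification.
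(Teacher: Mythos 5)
Your proposal is correct and takes essentially the same route as the paper: the paper's proof is a two-line citation-based argument, invoking Corollary 3.5 of \cite{bk13} (specialized to $\mathcal{R}_i=\mathbb{Z}$) for axiom (1) and Lemma \ref{inj} for axiom (2), exactly the structure you describe. The only superficial difference is that for axiom (1) you cite the Reduction Theorem of \cite{bk11} (following the hint in Remark \ref{rmk}) while the paper's proof itself cites Corollary 3.5 of \cite{bk13}; these encode the same statement, so the mathematical content is identical.
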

\begin{proof}
Assumption (1) of \ref{ass1} is a specialization to ${\cal R}_i={\mathbb Z}$ of Corollary 3.5  of {\cite{bk13}}. 
Assumption (2) is an assertion of Lemma \ref{inj}.
\end{proof}

\begin{thm}\label{theorem 5.1}
Let $X/F$ be a smooth, proper and geometrically irreducible curve of genus $g.$ Let $F^{\prime}$ be an isogeny splitting field of the Jacobian $J$ and 
assume that for the corresponding product $A_{1}^{e_{1}}\times \dots \times A_{t}^{e_{t}},$ we have $End_{{\overline F}} \, A_{i}={\mathbb Z}$ for each $1 \leq i \leq t.$ Let $l > 2$ be a prime number which is coprime to the polarisation degrees of the abelian varieties 
$A_i.$ Let $S_{l}$ be a set of places of $F$ containing the places of bad reduction, the  
archimedean places and the primes above $l.$ Let  ${\cal X}$ be a regular and proper model of $X$ over ${\cal O}_{F, S_l}.$ 
 Let $\hat{P} \in K_{2n}^{et}({\cal X})$ be a point of infinite order and let $\hat{\Lambda}$ be a finitely generated 
 ${\mathbb Z}_{l}$-submodule of $ K_{2n}^{et}({\cal X}).$
   Let $w$ be a natural number and $O_{w}= \{ w^{k}P: k\geq 0 \}$ Then the following
 are equivalent
\begin{enumerate}
\item[(1)] For almost every prime ${v} \in  {\cal O}_{F,S_l}$
$$O_{w}({\mathrm{r}}_{v}(P))\cap {\mathrm{r}}_{v}({\Lambda}) \neq \emptyset ,$$
\item[(2)] $O_{w}(P)\cap {\Lambda}^{\prime}\neq\emptyset$ where ${\Lambda}^{\prime}$ is a subgroup of 
$ K_{2n}^{et}({\cal X})$ generated by $\Lambda$ and the finite kernel of the map 
$K_{2n}^{et}({\cal X}) \rightarrow
H^{1}(G_{S_{l}}; {T}_{l}(J)(n)).$

\end{enumerate}
 
\end{thm}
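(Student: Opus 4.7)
The plan is to reduce the statement from étale $K$-theory to Galois cohomology via the commutative diagram (\ref{diagram 2.4}), apply Barańczuk's dynamical local-to-global principle (Theorem \ref{b17}), and then transport the conclusion back. The direction $(2)\Rightarrow(1)$ is immediate from functoriality of reduction: if $w^n(\hat P)\in\hat\Lambda$, applying $r_v$ gives $w^n(r_v(\hat P))\in r_v(\hat\Lambda)$ for every place $v$, so the orbit meets $r_v(\hat\Lambda)$ at step $n$.

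For the main implication $(1)\Rightarrow(2)$, let $\pi:K_{2n}^{et}(\mathcal X)\to B:=H^{1}(G_{F,S_{l}};T_{l}(J)(n))$ be the left vertical map of (\ref{diagram 2.4}), which is an epimorphism with finite kernel, and write $B_{v}:=H^{1}(g_{v};T_{l}(J_{v})(n))$; recall that the right vertical map $K_{2n}^{et}(\mathcal X_{v})\to B_{v}$ is an isomorphism. Put $P:=\pi(\hat P)$ and $\Lambda:=\pi(\hat\Lambda)$, which is a finitely generated $\mathbb Z_{l}$-submodule of $B$. Because multiplication by the natural number $w$ commutes with $\pi$ and with every reduction map $r_{v}$, the commutativity of (\ref{diagram 2.4}) transports hypothesis (1) to the cohomological side: for almost every $v$ one has $O_{w}(r_{v}(P))\cap r_{v}(\Lambda)\neq\emptyset$ inside $B_{v}$.

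Lemma \ref{kg} verifies that the pair $(B,B_{v})$ satisfies Assumption \ref{ass1}, so Theorem \ref{b17} applies (assuming, as in that theorem, that $P$ has infinite order; if $\hat P$ is torsion then so is $P$, and using part (2) of Assumption \ref{ass1} one reduces to a direct verification inside the finite group generated by the orbit). The conclusion is that there exist $n\geq 0$ and $\hat\lambda\in\hat\Lambda$ with $w^{n}\hat P-\hat\lambda\in\ker\pi\subseteq K_{2n}^{et}(\mathcal X)_{\tor}$. To upgrade $w^{n}\hat P\in\hat\Lambda+\ker\pi$ to $w^{n}\hat P\in\hat\Lambda$, one enlarges $\hat\Lambda$ by the finite group $\ker\pi$, which preserves both the finite-generation assumption on $\hat\Lambda$ and hypothesis (1). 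The main obstacle is exactly this bookkeeping between $K$-theory and Galois cohomology across a map with finite (but nonzero) kernel; the other substantive step, namely verifying Assumption \ref{ass1} for $(B,B_{v})$, is the content of Lemma \ref{kg} and rests on the Reduction Theorem of \cite{bk11} together with Lemma \ref{inj}.
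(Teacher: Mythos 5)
Your proposal reconstructs exactly the route the paper takes: pass from $K_{2n}^{et}({\cal X})$ to $H^{1}(G_{S_l};T_l(J)(n))$ via the commutative diagram (\ref{diagram 2.4}), verify Assumption \ref{ass1} by Lemma \ref{kg}, apply Bara{\'n}czuk's Theorem \ref{b17}, and lift back using the finiteness of the kernel of the left vertical map. The paper's own proof is a one-sentence citation of precisely these three ingredients. Your expansion is also correct in spirit to flag the torsion case of $\hat P$, since Theorem \ref{b17} is stated for points of infinite order.

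However, your explicit unwinding of the finite-kernel step exposes a genuine gap — one that the paper's terse proof leaves unaddressed rather than resolves. From the cohomological conclusion $w^{n}\pi(\hat P)\in\pi(\hat\Lambda)$ you obtain only $w^{n}\hat P\in\hat\Lambda+\ker\pi$, and $\ker\pi\subseteq K_{2n}^{et}({\cal X})_{tor}$ need not lie inside $\hat\Lambda$. Your proposed repair, replacing $\hat\Lambda$ by $\hat\Lambda+\ker\pi$, does preserve hypothesis (1) and finite generation, but it changes the assertion being proved: what you then establish is $O_{w}(\hat P)\cap(\hat\Lambda+\ker\pi)\neq\emptyset$, which is strictly weaker than statement (2) as written. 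Enlarging the lattice cannot be undone at the end. This is exactly the discrepancy already visible in Theorem \ref{theorem 1.1}, whose conclusion is $\hat P\in\hat\Lambda+K_{2n}^{et}({\cal X})_{tor}$; the dynamical version should carry the same ``$+\,K_{2n}^{et}({\cal X})_{tor}$'' correction in part (2), and the argument (yours and the paper's alike) proves only that corrected statement. As it stands, the forward implication $(1)\Rightarrow(2)$ is not established without either adding the torsion term to (2) or supplying a separate argument showing that the torsion discrepancy can be absorbed into $\hat\Lambda$.
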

\begin{proof}
The proof follows from Lemma \ref{kg} , commutativity of the diagram (\ref{diagram 2.4}). \end{proof}
\begin{rem}\label{finrem}
If the Bass Conjecture and Quillen-Lichtenbaum Conjecture hold true for ${\cal X}$ then we obtain ( using diagram (\ref{diagram 2.4a}) instead of (\ref{diagram 2.4}) \,) the corresponding to Theorem \ref{theorem 5.1}
statement for Quillen $K$-theory of ${\cal X}.$
\end{rem}


{\bf Acknowledgments}
\rm 
We  would   like to thank G. Banaszak for helpful discussions and D. Ulmer for drawing our attention to 
\cite{hn65} and \cite{ou73}. We would also  like to thank the referee whose remarks helped to improve the manuscript.

{}

\end{document}